\theoremstyle{plain}% default
\newtheorem{theorem}{Theorem}[section]
\newtheorem{lemma}[theorem]{Lemma}
\newtheorem{prop}[theorem]{Proposition}
\theoremstyle{definition}
\newtheorem{definition}{Definition}[section]
\theoremstyle{remark}
\newtheorem*{remark}{Remark}
\newcommand{\R}{\mathbb{R}}
\newcommand{\Pro}{\mathcal{P}}
\newcommand{\pr}{\mathbb{P}}
\title{Distribution's template estimate with Wasserstein metrics} 
\author{Emmanuel Boissard, Thibaut Le Gouic, Jean-Michel Loubes}
\date{{Institut de Math\'ematiques de Toulouse, Universit\'e Toulouse Paul Sabatier }}
\begin{document}
\maketitle

\begin{abstract} 
In this paper we tackle the problem of comparing distributions  of random variables and defining a mean pattern between a sample of random events. Using barycenters of measures in the Wasserstein space, we propose an iterative version as an estimation of the mean distribution. Moreover, when the distributions are  a common measure warped by a centered random operator, then the barycenter enables to recover this distribution template. 
\end{abstract}

{\bf Keywords:} Wasserstein Distance; Template estimation; Clustering, Fr\'echet mean.\vskip .1in
\indent {\bf e-mail}: boissard,tlegouic,loubes@math.univ-toulouse.fr

\section{Introduction}
Giving a sense to the  notion of {\it mean behaviour} may be counted among the very early activities of statisticians. When confronted to a large sample of high dimensional data, the usual notion of Euclidean mean is not usually enough since the information conveyed by the data possesses an inner geometry far from the Euclidean one. Indeed, deformations  on the data such as translations, scale location models for instance or more general warping procedures prevent the use of the usual methods in data analysis.  The mere issue of defining the mean of the data becomes a difficult task. This problem arises naturally for a wide range of statistical research fields such as functional data analysis for instance in \cite{Gamboa-Loubes-Maza-07}, \cite{MR2168993}, \cite{2011arXiv1101.0736B}  and references therein, image analysis in \cite{TrouveY05}  or \cite{JASA}, shape analysis in \cite{kendall} or \cite{bb206} with many applications ranging from biology in \cite{Bolstad-03} to pattern recognition \cite{Sakoe-Chiba-78} just to name a few.\vskip .1in
Without any additional knowledge, this problem is too difficult to solve. Hence to tackle this issue, two main directions have been investigated. On the one hand, some assumptions are made on  the deformations. Models governed by parameters have been proposed, involving for instance  scale location parameters, rotations, actions of parameters of Lie groups as in \cite{myptrf} or in a more general way deformations parametrized by their coefficients on a given basis or in an RKHS set \cite{allason}. Adding structure on the deformations enables to define the {\it mean behaviour} as the data warped by the {\it mean deformation}, i.e the deformation parametrized by the mean of the parameters. Bayesian  or semi-parametric statistics enable to provide sharp estimation of these parameters. However, the consistency of the estimator remains  a theoretical issue for many cases.\\
\indent On the other hand, another direction consists in finding an adequate distance between the data which reveals the information which is conveyed.  Actually, the chosen distance depends on the nature of the set where the observations belong, whose estimation is a hard task. We refer for instance to  \cite{Pennec2006} for some examples. Once an appropriate distance has been chosen, difficulties arise when trying to define the mean as the minimum of the square distance since both existence and uniqueness rely on assumptions on the geometry of the data sets. This will be the framework of our work. 
\vskip .1in Assume that we observe $j=1,\dots,J$ samples of $i=1,\dots,n$ independent random variables $X_{i,j}\in \mathbb{R}^d$ with distribution $\mu_j$. We aim at defining the {\it mean} behaviour of these observations, i.e their {\it mean} distribution. For this we will extend  the notion of barycenter of the distributions with respect to  the Wasserstein distance defined  in \cite{ agueh2010barycenters} to the empirical measures and prove the consistency of its estimate. Actually, Wasserstein distance is a powerful tool to compute distance between distributions, with application in statistics pioneered in \cite{MR1698999}, \cite{MR2435470} or \cite{MR1740113} for instance.
 Moreover, we will tackle the case where the distributions are the images of  an unknown original distribution by  random operators under some  suitable assumptions. In this case, we prove that an iterative version of the barycenter of the empirical distributions provides an estimate which enables to recover the original template distribution when the number of replications $J$ is large enough. \vskip .1in
The paper falls into the following parts. Section~\ref{s:bary} is devoted to the extension of the notion of Barycenter in the Wasserstein space for empirical measures. In Section~\ref{s:iterate}, we consider a modification of the notion of barycenter by considering iterative barycenters, which have the advantage to enable to recover the distribution pattern as proved in Section~\ref{s:template}. Finally, some  data applications are outlined in Section~\ref{s:appli}.

\section{Barycenters in the Wasserstein space: Notations and general results} \label{s:bary}

Let $(E, d, \Omega)$ denote a metric measurable space. The set of probability measures
over $E$ is denoted by $\Pro(E)$.
Given a collection of probability measures $\mu_ 1, \ldots, \mu_J$ over $E$,
and weights $\lambda_1, \ldots, \lambda_J \in \R$, $\lambda_j \geq 0$, 
$1 \leq j \leq J$, $\sum_{j = 1}^J \lambda_j = 1$, 
there are several 
natural ways to define a weighted average of these measures.
Perhaps the most straightforward is to take the convex combination of these measures

\begin{equation*}
 \mu_c = \sum_{j = 1}^J \lambda_j \mu_j,
\end{equation*}
using the fact that probability measures form a convex subset of the linear space of finite measures.
However, if we provide $\Pro(E)$ with some metric structure, 
the definition above is not really appropriate.

We denote by $\Pro_2(E)$ the set of all probability measures over $E$ with a finite second-order moment.
Given two measures $\mu$, $\nu$ in $\Pro(E)$, we denote by $\Pro(\mu, \nu)$ the set of all
probability measures $\pi$ over the product set $E \times E$ 
with first, resp. second, marginal $\mu$, resp. $\nu$.

The transportation cost with quadratic cost function, or quadratic transportation cost, 
between two measures
$\mu$, $\nu$  in
$\Pro_2(E)$, is defined as

\begin{equation*}
 \mathcal{T}_2(\mu, \nu) = \inf_{ \pi _\in \Pro(\mu, \nu)} \int d(x, y)^2 d \pi.
\end{equation*}

The quadratic transportation cost allows to endow the set of probability measures 
(with finite second-order moment)
 with a metric by setting

\begin{equation*}
 W_2(\mu, \nu) = \mathcal{T}_2(\mu, \nu)^{1/2}.
\end{equation*}

This metric is known under the name of $2$-Wasserstein distance.

In Euclidean space, the barycenter of the points $x_1, \ldots, x_J$ 
with weights $\lambda_1, \ldots, \lambda_J$,
$\lambda_j \geq 0$, $\sum_{j = 1}^J \lambda_j= 1$, is defined as

\begin{equation*}
 b = \sum_{j = 1}^J \lambda_j x_j.
\end{equation*}

It is also the unique minimizer of the functional

\begin{equation*}
y\mapsto E(y) = \sum_{j = 1}^J \lambda_j |x_j - y|^2.
\end{equation*}

By analogy with the Euclidean case, we give the following definition for Wasserstein barycenter,
 introduced 
by M.~Agueh and G.~Carlier in 
\cite{agueh2010barycenters}.

\begin{definition}

 We say that the measure $\mu \in \Pro_2(E)$ is a Wasserstein barycenter for 
the measures $\mu_1, \ldots, \mu_J \in \Pro_2(E)$ endowed with
weights $\lambda_1, \ldots, \lambda_J$, where $\lambda_j \geq 0$, $ \leq j \leq J$, 
and $\sum_{j = 1}^J \lambda_j = 1$,
if $\mu$ minimizes

\begin{equation*}
 E(\nu) = \sum_{j = 1}^J \lambda_j W_2^2(\nu, \mu_j).
\end{equation*}

We will write

\begin{equation*}
 \mu_B(\lambda) = \text{Bar}( (\mu_j, \lambda_j)_{1 \leq j \leq J}).
\end{equation*}

\end{definition}

In other words, the barycenter is the weighted Fr\'echet mean in the Wasserstein space.
In \cite{agueh2010barycenters}, the authors prove that when
$E = \R^d$  the barycenter exists. They also provide suitable assumptions on the measures $\mu_j$, $1 \leq j \leq J$ to ensure that the barycenter  is unique. For example, a sufficient condition is that
one of the measures $\mu_j$ admits a density with respect to the Lebesgue measure.
They also provide a problem that is the dual of the minimization of the
functional $E$ defined above, as well as characterizations of the barycenter. \vskip .1in

Next, we recall a version of Brenier's theorem on the characterization of quadratic optimal 
transport in $\R^d$. Throughout all the paper we will use the following notation. 

\begin{definition}
 Let $E$, $F$ be measurable spaces and $\mu \in \Pro(E)$. Let $T : E \rightarrow F$ be a measurable map.
The push-forward of $\mu$ by $T$ is the probability measure $T_\# \mu \in \Pro(F)$ defined by
the relations

\begin{equation*}
 T_\# \mu(A) = \mu(T^{-1}(A)), \quad A \subset F \text{ measurable.}
\end{equation*}

\end{definition}
Hence Brenier's theorem can be stated as follows.

\begin{theorem}[Brenier's theorem, see \cite{brenier1991polar}]
{Let $\mu, \nu \in \Pro_2(\R^d)$ be  measures}, with $\mu$ 
absolutely continuous w.r.t. Lebesgue measure. Then there exists a $\mu$-a.e. unique map 
$T : \R^d \rightarrow \R^d$ such that

\begin{itemize}
 \item $T_\# \mu = \nu$,
 \item $W_2^2 (\mu, \nu) = \int_{\R^d} |T(x) - x|^2 \mu(dx)$.
\end{itemize}

Moreover, there exists a lower semi-continuous convex function $\varphi : \R^d \rightarrow \R$ such that
$T = \nabla \varphi$ $\mu$-a.e., and $T$ is the only map of this type pushing forward $\mu$ to $\nu$, up to a
$\mu$-negligible modification. The map $T$ is called the Brenier map from $\mu$ to $\nu$.

\end{theorem}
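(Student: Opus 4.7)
The plan is to follow the now-standard route through Kantorovich duality and cyclic monotonicity, which is what Brenier originally did in \cite{brenier1991polar}. First I would establish the existence of an optimal transport plan: the set $\Pro(\mu,\nu)$ of couplings with prescribed marginals is tight (both marginals are fixed and compactly supported), hence relatively compact for the weak topology by Prokhorov's theorem, and the functional $\pi\mapsto \int |x-y|^2 d\pi$ is lower semi-continuous, so the infimum defining $\mathcal{T}_2(\mu,\nu)$ is attained by some plan $\pi^\star$.

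Next I would show that $\mathrm{supp}(\pi^\star)$ is cyclically monotone with respect to the quadratic cost, i.e.\ for every finite family $(x_i,y_i)\in\mathrm{supp}(\pi^\star)$ one has $\sum_i |x_i-y_i|^2 \leq \sum_i |x_i - y_{\sigma(i)}|^2$ for any permutation $\sigma$. This is standard: any violation would let us rewire the plan on a small product of neighborhoods and strictly decrease the cost, contradicting optimality. Expanding the squares reduces cyclic monotonicity for the quadratic cost to the classical notion, namely $\sum_i \langle x_i, y_{i+1} - y_i\rangle \leq 0$. By Rockafellar's theorem, such a set is contained in the graph of the subdifferential $\partial\varphi$ of some proper lower semi-continuous convex function $\varphi:\R^d\to\R\cup\{+\infty\}$; compact support of $\mu,\nu$ keeps $\varphi$ finite on a neighborhood of $\mathrm{supp}(\mu)$, hence locally Lipschitz there.

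Now I would use the absolute continuity of $\mu$. Since $\varphi$ is convex and locally Lipschitz on an open set containing $\mathrm{supp}(\mu)$, it is differentiable outside a Lebesgue-negligible set (Rademacher, or more sharply Aleksandrov), which is also $\mu$-negligible because $\mu\ll\text{Leb}$. At every point $x$ of differentiability the subdifferential $\partial\varphi(x)$ reduces to the singleton $\{\nabla\varphi(x)\}$, so the fibre of $\pi^\star$ over $x$ is concentrated on $\nabla\varphi(x)$. This forces $\pi^\star = (\mathrm{id},T)_\#\mu$ with $T=\nabla\varphi$, which immediately gives $T_\#\mu=\nu$ by reading the second marginal, and the cost identity $W_2^2(\mu,\nu)=\int |T(x)-x|^2\mu(dx)$ by substitution.

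For uniqueness, suppose $T'=\nabla\varphi'$ is another such gradient-of-convex map pushing $\mu$ to $\nu$ with optimal cost. Then $(\mathrm{id},T')_\#\mu$ is also optimal, its support is cyclically monotone and contained in the graph of $\partial\varphi'$, and the convex-analytic fact that two convex functions on $\R^d$ whose subdifferentials coincide on a set of full $\mu$-measure have a.e.\ equal gradients yields $T=T'$ $\mu$-a.e. The main obstacle in this program, and the only nontrivial input beyond routine weak-compactness arguments, is the Rockafellar step that promotes a cyclically monotone set to a subdifferential of a single convex potential; everything else is then bookkeeping that exploits the $\mu\ll\text{Leb}$ assumption.
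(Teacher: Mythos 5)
The paper itself offers no proof of this statement: it is quoted as a known result from \cite{brenier1991polar}, so there is no internal argument to compare against. Your sketch follows the standard Brenier--McCann route (existence of an optimal plan by Prokhorov plus lower semi-continuity, cyclical monotonicity of the support, Rockafellar's theorem to produce a convex potential $\varphi$, then $\mu\ll\mathrm{Leb}$ plus Rademacher to collapse the plan onto the graph of $\nabla\varphi$), and that part is correct; with $\nu$ compactly supported one can even take $\varphi$ to be a supremum of affine functions with slopes in $\mathrm{supp}(\nu)$, hence globally Lipschitz, which makes the differentiability step clean.

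The uniqueness step, however, has a genuine gap. First, the ``convex-analytic fact'' you invoke is circular as stated: if the subdifferentials of $\varphi$ and $\varphi'$ already coincide on a set of full $\mu$-measure, then of course the gradients agree $\mu$-a.e.; the whole difficulty is to show that the two optimal plans are governed by a \emph{common} potential. The standard fix is to consider the mixture $\tfrac12\bigl((\mathrm{id},T)_\#\mu+(\mathrm{id},T')_\#\mu\bigr)$, which is again optimal by convexity of the cost functional and linearity of the constraints, hence has cyclically monotone support contained in $\partial\psi$ for a single convex $\psi$; at $\mu$-a.e.\ point of differentiability of $\psi$ this forces $T=\nabla\psi=T'$. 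Second, the theorem's last clause asserts that $T$ is the \emph{only} gradient of a convex function pushing $\mu$ forward to $\nu$, with no optimality hypothesis on the competitor, whereas your argument assumes $T'$ attains the optimal cost. To close this you need the converse implication: if $T'=\nabla\varphi'$ and $T'_\#\mu=\nu$, then the plan $(\mathrm{id},T')_\#\mu$ is supported in the graph of $\partial\varphi'$, which is cyclically monotone, and for the quadratic cost with compactly supported (or finite second moment) marginals cyclical monotonicity of the support implies optimality (e.g.\ via the dual pair $(\varphi',\varphi'^*)$); only then does the uniqueness among optimal plans apply. With these two repairs the proposal becomes the complete classical proof.
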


\begin{remark}

The theorem above is commonly referred to as Brenier's theorem and originated from Y. Brenier's work in the analysis and mechanics literature.
Much of the current interest in transportation problems emanates from this area of mathematics. We conform to the common use of the name.
However, it is worthwile pointing out that a similar statement was established earlier independently in a
probabilistic framework by J.A. Cuesta-Albertos and C. Matr\'{a}n \cite{cuesta1989notes} : they show existence of an optimal transport map for quadratic cost over
Euclidean and Hilbert spaces, and prove monotonicity of the optimal map in some sense (Zarantarello monotonicity).

\end{remark}

As observed in \cite{agueh2010barycenters}, the barycenter of two measures is the interpolant of these two 
measures in the sense of McCann.

\begin{prop}[See \cite{agueh2010barycenters}, Section 6.2]
 Let $\mu, \nu \in \Pro_2(\R^d)$ be absolutely continuous w.r.t. Lebesgue measure. Let $T : \R^d \rightarrow \R^d$
denote the Brenier map from $\mu$ to $\nu$. The barycenter of $(\mu, \lambda)$ and $(\nu, 1 - \lambda)$
is

\begin{equation*}
 \mu_\lambda = \left( \lambda  \text{Id} + (1 - \lambda) T \right)_\# \mu.
\end{equation*}

\end{prop}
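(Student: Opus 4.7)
The plan is to identify $\mu_\lambda$ as the midpoint in McCann's displacement interpolation, compute its distances to $\mu$ and $\nu$, and then match this against a lower bound on $E$ derived from the triangle inequality.

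First, I will exhibit two natural couplings to bound the Wasserstein distances from $\mu_\lambda$ to the endpoints. The map $(\lambda \text{Id} + (1-\lambda)T, \text{Id})_\# \mu$ is a transport plan between $\mu_\lambda$ and $\mu$, giving
\begin{equation*}
W_2^2(\mu_\lambda, \mu) \;\leq\; \int_{\R^d} |(\lambda \text{Id} + (1-\lambda)T)(x) - x|^2 \, \mu(dx) \;=\; (1-\lambda)^2 W_2^2(\mu, \nu),
\end{equation*}
using that $\int |T(x)-x|^2 \mu(dx) = W_2^2(\mu,\nu)$ by Brenier's theorem. Symmetrically, the coupling $(\lambda \text{Id} + (1-\lambda)T, T)_\# \mu$ gives $W_2^2(\mu_\lambda, \nu) \leq \lambda^2 W_2^2(\mu, \nu)$. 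The triangle inequality for $W_2$ then forces
\begin{equation*}
W_2(\mu, \nu) \;\leq\; W_2(\mu, \mu_\lambda) + W_2(\mu_\lambda, \nu) \;\leq\; (1-\lambda) W_2(\mu, \nu) + \lambda W_2(\mu, \nu),
\end{equation*}
so equality holds throughout, and $W_2(\mu, \mu_\lambda) = (1-\lambda) W_2(\mu,\nu)$, $W_2(\mu_\lambda, \nu) = \lambda W_2(\mu,\nu)$. Plugging these into $E$ yields $E(\mu_\lambda) = \lambda(1-\lambda) W_2^2(\mu, \nu)$.

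Next, I will show this is a lower bound for $E(\sigma)$ over all $\sigma \in \Pro_2(\R^d)$. For arbitrary $\sigma$, set $a = W_2(\sigma, \mu)$, $b = W_2(\sigma, \nu)$, and $D = W_2(\mu, \nu)$; by the triangle inequality $a + b \geq D$. The elementary minimization of $(a,b) \mapsto \lambda a^2 + (1-\lambda) b^2$ over the region $\{a, b \geq 0,\, a+b \geq D\}$ (a convex quadratic with positive-definite Hessian, whose unconstrained minimizer lies outside the feasible region) is attained on the line $a + b = D$ at $(a,b) = ((1-\lambda)D, \lambda D)$, with minimal value $\lambda(1-\lambda) D^2$. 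Hence $E(\sigma) \geq \lambda(1-\lambda) W_2^2(\mu, \nu) = E(\mu_\lambda)$.

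The only delicate point is the first step: justifying the two coupling bounds requires knowing that pushing $\mu$ forward by $\lambda \text{Id} + (1-\lambda)T$ does produce a legitimate element of $\Pro_2(\R^d)$ and that the resulting product maps give valid transport plans — both routine from the definition of pushforward. Everything else is either triangle inequality, a one-variable calculus optimization, or a direct invocation of Brenier's theorem as recalled above, so no deeper tool is needed.
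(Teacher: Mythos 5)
Your argument is correct: the two couplings give $W_2^2(\mu_\lambda,\mu)\leq(1-\lambda)^2W_2^2(\mu,\nu)$ and $W_2^2(\mu_\lambda,\nu)\leq\lambda^2W_2^2(\mu,\nu)$, the triangle inequality turns these into equalities, so $E(\mu_\lambda)=\lambda(1-\lambda)W_2^2(\mu,\nu)$, and the scalar minimization of $\lambda a^2+(1-\lambda)b^2$ over $a+b\geq W_2(\mu,\nu)$ shows this value is a global lower bound for $E$; hence $\mu_\lambda$ is a barycenter. Note, however, that the paper does not prove this proposition at all — it quotes it from Agueh and Carlier (Section 6.2) — and the nearest in-paper argument, the proof of Proposition~\ref{prop:clememe}, goes through the multi-marginal formulation: the barycenter is $T_\#\gamma$ for the optimal multi-coupling $\gamma$, bounded below by $\sum_{i,j}\lambda_i\lambda_jW_2^2(\mu_i,\mu_j)$ and attained by $(\mathrm{Id},T)_\#\mu$ in the two-measure case. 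Your route is genuinely different and more elementary: it avoids the multi-marginal machinery and Proposition 4.2 of Agueh--Carlier entirely, at the price of being specific to two measures, whereas the multi-marginal argument generalizes directly to $J$ measures (as the paper exploits). Two small remarks: your argument shows $\mu_\lambda$ is \emph{a} minimizer, so the definite article in the statement still rests on the uniqueness result of Agueh--Carlier (already recorded in the paper, valid since $\mu$ is absolutely continuous); and the claim that the unconstrained minimizer lies outside the feasible region tacitly assumes $W_2(\mu,\nu)>0$ and $\lambda\in(0,1)$, the excluded cases being trivial. Neither is a genuine gap.
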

This provides a natural expression for the barycenter of measures as a convex combination of measures. 

\section{Estimation of Barycenters of empirical measures}
 Assume we do not observe the distributions $\mu_j$'s but approximations of these distributions. Let $\mu_j^n \in \Pro_2(\R^d)$ for $1 \leq j \leq J$ be these approximations  in the sense that they converge with respect to Wasserstein distance, i.e $W_2( \mu^n_j, \mu_j) \rightarrow 0$ when $n\rightarrow + \infty$. Our aim is to study the asymptotic behaviour of the barycenter of the $\mu^n_j$'s when $n$ goes to infinity.
\subsection{Consistency of the approximated barycenter}
We are interested here in statistical properties of the barycenter of the $\mu_1^n,\dots,\mu_J^n$. We begin by establishing a consistency result.

\begin{theorem} \label{th:EmpiricVersion}

Let $J \geq 1$, and for every $n \geq 0$, let $\mu_j^n \in \Pro_2(\R^d)$, $1 \leq j \leq J$,
be probability measures converging in Wasserstein topology to some probability measure $\mu_j$ for $1 \leq j \leq J$. 
Let $\lambda_1, \ldots, \lambda_J$ be positive weights.
Let $\mu^n_B$ be a barycenter of the $(\mu^n_j,\lambda_j)$.
The sequence $(\mu^n_B)_{n\geq 1}$ is compact and any of its limit points lies in $\text{Bar}( (\mu_j, \lambda_j)_{1 \leq j \leq J})$.
\end{theorem}

Note that If any of the $\mu^n_j$ is absolutely continuous with respect to the Lebesgue measure, then $\mu_B^n$ is unique and our theorem states that it converges to a barycenter of the limit measures $(\mu_j,\lambda_j)$. Likewise, if any of the $\mu_j$ is absolutely continuous with respect to the Lebesgue measure, any $\mu_B^n$ is converging to the unique barycenter of $(\mu_j,\lambda_j)$. \vskip .1in

The proof of this theorem relies on the following lemma which provides a characterization of a barycenter of measures. The  proof of the lemma is inspired by the proof of   Proposition 4.2 in \cite{agueh2010barycenters} and is postponed to the Appendix.

\begin{lemma}\label{lem:iff}
 Let $\Gamma(\mu_1,\ldots,\mu_J)$ be the set of probability measures on $(\mathbb{R}^d)^J$ with marginals $\mu_1,\ldots,\mu_J$, respectively and $T(x_1,\ldots,x_J)=\sum_{j=1}^J \lambda_j x_j$ with weights $\lambda_j \geq 0$ such that $\sum_{j=1}^J {\lambda_j}=1$. A probability measure $\nu$ is a barycenter of $\mu_1,...,\mu_J$  with weights $(\lambda_j)_{ \leq j \leq J}$ if and only if $\nu=T_\# \gamma$ where $\gamma \in \Gamma(\mu_1,...,\mu_J)$ minimizes
\begin{equation}\label{eq:a_minimiser}
\int \sum_{1 \leq j \leq J}\lambda_j \|T(x_1,...,x_J) - x_j\|^2 d\gamma(x_1,...,x_J).
\end{equation}
\end{lemma}

\subsection{An Iterative version of barycenters of measures} \label{s:iterate}
Barycenters in Euclidean spaces enjoy the \emph{associativity property} : 
the barycenter of $x_1, x_2, x_3$ with weights $\lambda_1, \lambda_2, \lambda_3$
coincides with the barycenter of $x_{1 2}, x_3$ with weights $\lambda_1 + \lambda_2, \lambda_3$ 
when $x_{12}$ is the barycenter of 
$x_1, x_2$ with weights $\lambda_1, \lambda_2$. 
This property, as we will see, no longer holds when considering barycenters in
Wasserstein spaces over Euclidean spaces, with the notable exception of dimension $1$. \vskip .1in
Therefore we introduce a notion of \emph{iterated barycenter}
as the point obtained by successively taking two-measures barycenters 
with appropriate weights. This does not in general coincide with the 
ordinary barycenter.
However, we will identify cases where the two notions match.

\begin{definition} \label{def:IB}
 Let $\mu_i \in \Pro_2 ( E)$, $1 \leq i \leq n$, and $\lambda_i > 0$, $1 \leq i \leq n$ with $\sum_{i = 1}^n \lambda_i = 1$.
The iterated barycenter of the measures $\mu_1, \ldots, \mu_n$ with weights $\lambda_1, \ldots, \lambda_n$
is denoted by $\text{IB}( (\mu_i, \lambda_i)_{1 \leq i \leq n})$ and is defined as follows :

\begin{itemize}
 \item $\text{IB} ( (\mu_1, \lambda_1)) = \mu_1$, 
 \item $\text{IB} ( (\mu_i, \lambda_i)_{ 1 \leq i \leq n}) = \text{Bar } \left[ ( \text{IB} ( (\mu_i, \lambda_i)_{ 1 \leq i \leq n - 1}), \lambda_1 + \ldots + \lambda_{n - 1} ), (\mu_n, \lambda_n) \right]$
\end{itemize}

\end{definition}

%\begin{remark}
% 
%Iterated barycenters are well-suited to computations, since
%there exist efficient numerical methods to compute McCann's interpolant, 
%see e.g.  \cite{srivastava2009numerical}.
%Moreover, as we will see later, in some cases of interest the iterated barycenter does not depend on the order
%in which two-measures barycenters are taken, allowing for parallel computation schemes.
%\end{remark}

The next proposition establishes consistency of iterated barycenters of approximated measures $\mu_j^n$, for $j=1,\dots,J$.

\begin{theorem} \label{prop:ngrand}
 The iterated barycenter is consistent : if $\mu_j^n \rightarrow \mu_j$ in $W_2$ distance for $j = 1, \ldots, J$, then 

\begin{equation*}
 IB ( ( \mu_j^n, \lambda_j)_{1 \leq j \leq J}) \rightarrow IB ( (\mu_j, \lambda_j)_{1 \leq j \leq J})
\end{equation*}
 
in $W_2$ distance. 
\end{theorem}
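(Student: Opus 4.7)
The plan is to proceed by induction on $J$. The base case $J = 1$ is immediate since $IB((\mu_1^n, 1)) = \mu_1^n$ converges to $\mu_1 = IB((\mu_1, 1))$ by assumption. For the inductive step, set $\Lambda_{J-1} = \sum_{j=1}^{J-1} \lambda_j$ and write
\begin{equation*}
\nu^n := IB((\mu_j^n, \lambda_j)_{1 \le j \le J-1}), \qquad \nu := IB((\mu_j, \lambda_j)_{1 \le j \le J-1}).
\end{equation*}
By the induction hypothesis, $W_2(\nu^n, \nu) \to 0$. Definition \ref{def:IB} then yields
\begin{equation*}
IB((\mu_j^n, \lambda_j)_{1 \le j \le J}) = \text{Bar}\bigl[(\nu^n, \Lambda_{J-1}), (\mu_J^n, \lambda_J)\bigr],
\end{equation*}
and similarly for the limiting measures. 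The claim thus reduces to the joint $W_2$-continuity of the two-measure barycenter operation, which is exactly Theorem \ref{th:EmpiricVersion} specialized to $J = 2$ applied to the converging pair $(\nu^n, \mu_J^n) \to (\nu, \mu_J)$. Chaining these $J-1$ applications of the two-measure consistency result completes the induction.

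The main technical point, and the only real obstacle, is that each such application of Theorem \ref{th:EmpiricVersion} requires the intermediate barycenters $\nu^n$ to be absolutely continuous with respect to Lebesgue measure. This must be propagated along the induction. Assume $\mu_1^n$ is absolutely continuous (an assumption inherited from the working hypothesis on the approximating sequence). By the preceding proposition on McCann's interpolant, the two-measure barycenter of an absolutely continuous $\mu$ with any $\nu \in \Pro_2(\R^d)$ takes the form $(\lambda \,\text{Id} + (1-\lambda) T)_\# \mu$, where $T = \nabla \varphi$ is the Brenier map from $\mu$ to $\nu$. The pushing-forward map
\begin{equation*}
\lambda \,\text{Id} + (1-\lambda) \nabla \varphi = \nabla\Bigl( \tfrac{\lambda}{2} |x|^2 + (1-\lambda) \varphi(x) \Bigr)
\end{equation*}
is the gradient of a strictly convex function, hence is injective almost everywhere with a.e.\ invertible Jacobian, so it preserves absolute continuity under push-forward. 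Consequently every iterated barycenter $\nu^n$ remains absolutely continuous, and Theorem \ref{th:EmpiricVersion} may be invoked at each step of the induction.
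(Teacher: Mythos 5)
Your proof is correct, and its skeleton---induction on $J$, reducing everything to $W_2$-continuity of the two-measure barycenter---is the same reduction the paper makes (``it is sufficient to prove the result for two measures, because then the result may be obtained by recurrence''). Where you diverge is in how that two-measure continuity is obtained. The paper gives a self-contained argument: it writes the two-measure barycenter as McCann's interpolant $((1-t)\mathrm{Id}+tT_n)_\#\mu_n$, invokes stability of optimal transport plans (Theorem 5.20 of Villani) to get weak convergence of the plans $\pi_n=(\mathrm{Id}\times T_n)_\#\mu_n$, upgrades this to $W_2$-convergence by checking convergence of second moments, and concludes by pushing forward with the Lipschitz map $f_t(x,y)=(1-t)x+ty$. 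You instead invoke Theorem~\ref{th:EmpiricVersion} specialized to $J=2$, which is legitimate since by Definition~\ref{def:IB} the two-measure iterated barycenter is the ordinary barycenter; this buys brevity and reuses the multi-marginal machinery already established, at the price of needing the absolute-continuity hypotheses of that theorem at every stage of the induction. Your explicit propagation of absolute continuity is in fact a point the paper leaves implicit (its own recurrence also needs the intermediate barycenters to be absolutely continuous so that Brenier maps and unique optimal plans exist), so addressing it is an improvement; two small touch-ups are needed, though. First, the justification should rest on strong, not merely strict, convexity: since the weight on $\mathrm{Id}$ is positive, the map $S=\Lambda\,\mathrm{Id}+(1-\Lambda)\nabla\varphi$ satisfies $\langle S(x)-S(y),x-y\rangle\geq\Lambda|x-y|^2$, so its inverse is Lipschitz on the image and pulls Lebesgue-null sets back to null sets---``injective with a.e.\ invertible Jacobian'' alone is not sufficient to preserve absolute continuity without a Lusin-type condition. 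Second, Theorem~\ref{th:EmpiricVersion} also requires the limiting measures to be absolutely continuous, so you should note that the limiting intermediate barycenter $\nu$ inherits absolute continuity by the same argument applied to the $\mu_j$.
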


\begin{remark}
 Iterated barycenters as well as barycenters are well-suited to computations, since
there exist efficient numerical methods to compute McCann's interpolant, 
see e.g.  \cite{benamou2000computational, haber2010efficient}. The purpose of introducing the iterative barycenters is that, as shown in the next Section~\ref{lepouquoi}, the resulting measure has an expression as the image of a measure by a linear combination of maps. This will be helpful when considering a warping setting.  Moreover, as we will see later, in some cases of interest the iterated barycenter does not depend on the order
in which two-measures barycenters are taken, allowing for parallel computation schemes.
\end{remark}

\section{Deformations of a template measure} \label{s:template}

We now would like to use Wasserstein barycenters or iterated barycenters in the following framework : let $(E, d, \Omega)$ denotes a metric measurable space and
assume that we observe probability measures in $\Pro(E)$, $\mu_1, \ldots, \mu_J$ that are deformed versions, in some sense, of
an original measure $\mu$. We would like to recover $\mu$ from the observations. 
Here, we propose to study the relevance of the barycenter as an estimator of the template measure, when
the deformed measures are of the type $\mu_j = {T_j}_\# \mu$ for suitable push-forward maps $T_j$.

Our aim here is to extend the results of J.F.~Dupuy, J.M.~Loubes and E.~Maza in \cite{dupuy2011non}.
They study the problem of \emph{curve registration}, that we can describe as follows : given an unknown 
increasing function $F : [a, b] \mapsto [0, 1]$, and a random variable $H$ with values in the set of continuous increasing
functions $h : [a, b] \mapsto [a, b]$, we observe $F \circ h_1^{-1}, \ldots, F \circ h_n^{-1}$ where $h_i$ are i.i.d.
versions of $H$ (randomly warped versions of $F$). Let $\mu \in \Pro(\R)$ denote the probability measure that admits $F$ as its c.d.f. : then the
above amounts to saying that we observe ${h_i}_\# \mu$, $1 \leq i \leq n$. The authors 
build an estimator by using quantile
functions that turns out to be the Wasserstein barycenter of the observed measures. 
They show that the estimator
converges to ${(\mathbb{E}H)}_\# \mu$.\vskip .1in
Hereafter, we first define a class of deformations for distributions, which are modeled by a push forward  action by a family of measurable maps $T_j,\: j=1,\dots,J$ undergoing the following restrictions. Such deformations will be called {\it admissible}.

\subsection{Admissible deformations}

\begin{definition}
 The set $GCF(\Omega)$ is the set of all gradients of convex functions, that is to say
 the set of all maps $T : \Omega \rightarrow \R^n$ such that there exists a proper convex l.s.c. function $\phi : \Omega \rightarrow \R$
 with $T = \nabla \phi$.
\end{definition}

\begin{definition}
 We say that the family $(T_i)_{i \in I}$ of maps on $\Omega$ is an \emph{admissible family of deformations} if the 
following requirements are satisfied :

\begin{enumerate}
 \item there exists $i_0 \in I$ with $T_{i_0} = \text{Id}$,
 \item the maps $T_i : \Omega \rightarrow \Omega$ are one-to-one and onto,
 \item for $i, j \in I$ we have $T_i \circ T_j^{-1} \in GCF(\Omega)$.
\end{enumerate}

\end{definition}

 The following Proposition provides examples are  of  such deformations.

\begin{prop} \label{prop:admi}

 The following are admissible families of deformations on domains of $\R^n$.

\begin{itemize}
 \item The set of all product continuous increasing maps on $\R^n$, i.e. the set of all maps 

\begin{equation*}
 T : x \mapsto (F_1(x_1), \ldots, F_n(x_n))
\end{equation*}

where the functions $F_i : \R \rightarrow \R$ are continuous increasing functions with $F_i \rightarrow_{- \infty} - \infty$, $F_i \rightarrow_{+ \infty} + \infty$.
 
In particular, this includes the family of scale-location transformations, i.e. maps of the type $x \mapsto a x + b$, $a > 0$, $b \in \R^n$.
\item The set of \emph{radial distorsion} transformations, i.e. the set of maps

\begin{equation*}
 T : \R^n \rightarrow \R^n, \quad x \mapsto F(|x|) \frac{x}{|x|}
\end{equation*}

where $F : \R^+ \mapsto \R^+$ is a continuous increasing function such that $F(0) = 0$.
 \item The maps $ ^t G \circ T_i \circ G$ where $(T_i)_{i \in I}$ is an admissible family of deformations on $\Omega$ and 
$G \in \mathcal{O}_n$ is a fixed orthogonal matrix. This family has $^t G( \Omega)$ as its domain.
\end{itemize}

\end{prop}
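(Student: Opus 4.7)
The plan is to verify the three axioms of admissibility (containing the identity, bijectivity of each member, and $T_i \circ T_j^{-1} \in GCF(\Omega)$) separately for each of the three classes. In every case the first two axioms are immediate: the identity is obtained by choosing $F_i(t)=t$, $F(r)=r$, or $G = I$ respectively, and the maps are bijections of their domains because each one-dimensional profile $F_i$ or $F$ is a continuous strictly increasing surjection, or because $G$ is invertible. The real content is thus to exhibit, in each case, a proper convex l.s.c.\ potential $\phi$ with $\nabla\phi = T_i \circ T_j^{-1}$.

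For the product family, the composition of $T_i$ and $T_j^{-1}$ is again a coordinatewise map, with $k$-th coordinate $G_k = F_i^{(k)} \circ (F_j^{(k)})^{-1}$, which is continuous and strictly increasing on $\R$. Let $\Phi_k$ be any primitive of $G_k$ and set $\phi(y) = \sum_{k=1}^n \Phi_k(y_k)$. Each $\Phi_k$ is convex because its derivative $G_k$ is nondecreasing, so $\phi$ is convex (and continuous, hence proper and l.s.c.), and by construction $\nabla\phi(y) = (G_1(y_1),\ldots,G_n(y_n)) = T_i \circ T_j^{-1}(y)$.

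For the radial distorsion family, a direct computation shows that $T_i \circ T_j^{-1}$ is again radial with profile $F_i \circ F_j^{-1}$: indeed, if $z = F_j^{-1}(|y|) y/|y|$ then $|z| = F_j^{-1}(|y|)$ and $z/|z| = y/|y|$, so $T_i(z) = F_i(F_j^{-1}(|y|))\, y/|y|$. Let $\Phi$ be a primitive of the continuous increasing, non-negative function $F_i \circ F_j^{-1}$, and set $\phi(y) = \Phi(|y|)$. Then $\Phi$ is convex (its derivative is nondecreasing) and nondecreasing (its derivative is $\geq 0$), so $\phi$ is the composition of a convex nondecreasing function with the convex function $y \mapsto |y|$, hence convex; a standard chain rule gives $\nabla\phi(y) = \Phi'(|y|)\, y/|y|$, which is exactly $T_i \circ T_j^{-1}$.

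For the orthogonally conjugated family, using $G\,{}^tG = I$ one gets
\begin{equation*}
({}^tG \circ T_i \circ G) \circ ({}^tG \circ T_j \circ G)^{-1} = {}^tG \circ (T_i \circ T_j^{-1}) \circ G.
\end{equation*}
By admissibility of $(T_i)$, write $T_i \circ T_j^{-1} = \nabla\psi$ for some proper convex l.s.c.\ $\psi$ on $\Omega$, and set $\tilde\psi = \psi \circ G$, defined on ${}^tG(\Omega)$. Since $G$ is linear, $\tilde\psi$ is convex, proper, and l.s.c., and the chain rule gives $\nabla\tilde\psi(y) = {}^tG\, \nabla\psi(Gy) = {}^tG \circ (T_i \circ T_j^{-1}) \circ G(y)$, as required. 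The main subtlety is the radial case, where one must check that the natural candidate $\phi(y)=\Phi(|y|)$ is convex globally (including at the origin where $|\cdot|$ fails to be smooth), but this follows cleanly from the monotonicity and nonnegativity of the profile $F_i \circ F_j^{-1}$ rather than from any delicate differential calculation.
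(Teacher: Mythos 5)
Your proof is correct and follows essentially the same route as the paper's: explicit primitives of the monotone one-dimensional profiles serve as convex potentials for the first two families (your extra care about convexity of $\Phi(|\cdot|)$ at the origin, via nondecreasing-convex composed with $|\cdot|$, is a point the paper glosses over, and your direct computation that compositions of radial maps are radial replaces the paper's remark that such maps form a group), and the conjugation case rests on the same chain-rule identity $\nabla(\psi \circ G) = {}^tG \circ \nabla\psi \circ G$. One small slip to fix: for the conjugated family the identity element is obtained by conjugating $T_{i_0} = \mathrm{Id}$ from the original admissible family, which gives ${}^tG\,G = \mathrm{Id}$ by orthogonality, not by ``choosing $G = I$'', since $G$ is a fixed matrix of the family.
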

Proof of Proposition~\ref{prop:admi}
\begin{proof}

Let us consider the  first family. Checking the two first requirements is straightforward and we only take care of the last one.
Let $S : x \mapsto (F_1(x_1), \ldots, F_n(x_n))$ and $T : x \mapsto (G_1(x_1), \ldots, G_n(x_n))$. The map $S \circ T^{-1}$ is
given by

\begin{equation*}
 S \circ T^{-1}(x) = \left( F_1 \circ G_1^{-1} (x_1), \ldots, F_n \circ G_n^{-1}(x_n) \right),
\end{equation*}

and this is the gradient of the function

\begin{equation*}
 x \mapsto \int_0^{x_1} F_1 \circ G_1^{-1} (z) dz + \ldots + \int_0^{x_n} F_n \circ G_n^{-1} (z) dz.
\end{equation*}

The functions $F_i \circ G_i^{-1}$ are increasing, so that their primitives are convex functions, which makes the function above
convex.

Second point : observe that radial distortion transformations form a group, so that
we only need show that each such transformation is the gradient of a convex function. And indeed, 
$T : x \mapsto F(|x|) \frac{x}{|x|}$ is the gradient of the function 

\begin{equation*}
 x \mapsto \int_0^{|x|} F(r) d r
\end{equation*}

and this is a convex function because $F$ is increasing.

The final item is a simple consequence of the observation that if $G \in \text{GL}_n$ and
$f : \R^n \rightarrow \R$ is differentiable, then $\nabla ( f \circ G) = ^t G \circ \nabla f \circ G$.
\end{proof}

\subsection{Barycenter of measures warped using admissible deformations} \label{lepouquoi}

We are interested in  recovering
a template measure  from deformed observations. The unknown template is a probability
 measure $\mu$ on the domain $\Omega \subset \R^d$, absolutely 
continuous w.r.t. the Lebesgue measure $\lambda$.
We represent the deformed observations as push-forwards of $\mu$ by maps 
$T : \Omega \rightarrow \Omega$, i.e. we observe 
$(T_j)_\# \mu$, $j = 1, \ldots, J$.

Theorem \ref{thm_averaging_deformations_in_admissible_case} 
states that when $T_j$ belongs to an 
admissible family of deformations, taking the iterated barycenter of the 
observations corresponds to averaging the deformations. With this explicit expression at hand, we can check that in the case described above, the iterated barycenter coincides with the usual notion of barycenter.

\begin{theorem} \label{thm_averaging_deformations_in_admissible_case}
 Assume that $(T_i)_{i \in I}$ is an admissible family of deformations on a domain $\Omega \subset \R^n$, 
and let $\mu \in \Pro_2( \Omega)$, $\mu << \lambda$. Let $\mu_j = (T_j)_\# \mu$. The following holds :

\begin{equation} \label{firstpart}
 IB( ( \mu_j , \lambda_j )_{1 \leq j \leq J} ) = ( \sum_{j = 1}^J \lambda_j T_j )_\# \mu.
\end{equation}
Moreover
\begin{equation} \label{prop:clememe}
 IB( ( \mu_j , \lambda_j )_{1 \leq j \leq J} ) = \text{Bar}( ( \mu_j , \lambda_j )_{1 \leq j \leq J} ).
\end{equation}
\end{theorem}

\begin{remark}$ $ \\

\begin{enumerate}
\item The special case of the dimension 1

 In dimension $1$, the set of \emph{all} continuous increasing maps is 
an admissible family of deformations. The previous theorem 
applies for this very large class of deformations. 
Results in this case are known from \cite{dupuy2011non}  or \cite{GALLON:2011:HAL-00593476:1}: the only
new part here is that the estimator can be computed iteratively.

\item Barycenters and iterated barycenters do not match in general.

The fact that the two notions of barycenter introduced above coincide no 
longer holds as soon as the dimension is larger than $2$. 
For a counterexample, consider the
case of non-degenerate centered Gaussian measures $\gamma_1, \ldots, \gamma_J$ on $\R^n$, defined by their 
covariances matrices $S_1, \ldots S_J \in \mathcal{S}_n^{++}$.

According e.g. to \cite{mccann1997convexity}, Example 1.7, the optimal transport map from
$\mathcal{N}(0, S)$ to $\mathcal{N}(0, T)$ is given by

\begin{equation*}
 x \mapsto T^{1/2} ( T^{1/2} S T^{1/2})^{-1/2} T^{1/2} x.
\end{equation*}

From this result, it is possible to give an explicit expression of the iterated barycenter.

On the other hand, according to Theorem 6.1 in \cite{agueh2010barycenters}, the barycenter 
of the $\mu_j$ with weights $1/J$
is the Gaussian measure with covariance matrix the unique positive definite solution of the fixed point equation

\begin{equation*}
 M = \frac{1}{J} \sum_{j = 1}^J \left( M^{1/2} S_j M^{1/2} \right)^{1/2}.
\end{equation*}

One may check that these two covariance matrices do not match in general.

\end{enumerate}
\end{remark}

\subsection{Template Estimation from admissible deformations}

Thanks to Theorem \ref{thm_averaging_deformations_in_admissible_case}, we can study 
the asymptotic behaviour of the barycenter when the number of replications of the warped distributions $J$ increases. Actually, we prove that the barycenter is  an estimator of the template distribution.

 Let $T$ be a process with values in some admissible family of deformations acting on a subset $\mathcal{I} \subset \mathbb{R}^d$.
\begin{equation*}
\begin{array}{rrcl}
T:&\Omega&\rightarrow&\mathcal{T}\left(\mathcal{I}\right)\\
&w&\mapsto&T(w,\cdot),
\end{array}
\end{equation*}
where $\left(\Omega,\mathcal{A},\mathbb{P}\right)$ is an unknown probability space,
   Assume that $T$ is  bounded and has a finite moment $\varphi(.)=\mathbb{E}(T(.))$. Let $T_j$ for $j=1,\dots,J$ be a random sample of realizations of the process $T$. Then, we observe measures $\mu_j$ which are warped by  $T_j$ in the sense that for all , $\mu_j={T_j}_\# \mu$.

\begin{theorem} \label{th:main} Assume that $\mu$ is compactly supported. As soon as $\varphi = {\rm id}$, $\mu_B$  the barycenter of the $\mu_j$'s with weights $1/J$ is a consistent estimate of $\mu$  when $J$ tends to infinity in the sense that a.s
$$  W_2^2( \mu_B, \mu) \stackrel{J \to \infty}{\longrightarrow} 0.$$

Moreover, assuming that $\|T - id\|_{L^2} \leq M$ a.s., we get the following error bound :

\begin{equation*}
 \pr( W_2( \mu_B, \mu) \geq \varepsilon) \leq 2 \exp{ \left( - J \frac{\varepsilon^2}{M^2(1 + c \varepsilon / M)} \right)}.
\end{equation*}

\end{theorem}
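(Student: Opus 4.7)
The plan is to reduce everything to a law of large numbers in the Hilbert space $L^2(\mu; \R^d)$ via the explicit barycenter formula. First I would invoke Theorem~\ref{thm_averaging_deformations_in_admissible_case} (together with Proposition~\ref{prop:clememe}) with uniform weights $\lambda_j = 1/J$ to write
\begin{equation*}
\mu_B = \bar{T}_{J\,\#}\,\mu, \qquad \bar{T}_J = \frac{1}{J}\sum_{j=1}^J T_j.
\end{equation*}
This turns a delicate variational object (the Wasserstein barycenter) into the push-forward by an explicit random map, so the remaining analysis happens entirely at the level of maps in $L^2(\mu)$.

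Next I would bound the Wasserstein distance by the $L^2(\mu)$-distance of transport maps: the coupling $(\bar{T}_J(X), X)$ with $X \sim \mu$ gives
\begin{equation*}
W_2^2(\mu_B, \mu) \leq \int_{\Omega} |\bar{T}_J(x) - x|^2\,\mu(dx) = \|\bar{T}_J - \mathrm{id}\|_{L^2(\mu)}^2.
\end{equation*}
Setting $X_j = T_j - \mathrm{id} \in L^2(\mu; \R^d)$, the assumption $\varphi = \mathrm{id}$ translates to $\mathbb{E}[X_j] = 0$ as a Bochner integral, and $\mu$ being compactly supported together with boundedness of $T$ makes the $X_j$ square-integrable Hilbert-space-valued random variables.

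For the first assertion I would apply the strong law of large numbers in the separable Hilbert space $L^2(\mu; \R^d)$: since the $X_j$ are i.i.d., centered, and bounded, $\|\bar{T}_J - \mathrm{id}\|_{L^2(\mu)} \to 0$ almost surely, hence $W_2^2(\mu_B, \mu) \to 0$ a.s. For the quantitative bound, I would invoke a Bernstein-type concentration inequality for sums of i.i.d. bounded Hilbert-space-valued random variables (Yurinskii's or Pinelis' inequality): under $\|X_j\|_{L^2(\mu)} \leq M$ a.s., the variance proxy is also controlled by $M^2$, yielding
\begin{equation*}
\pr\bigl( \|\bar{T}_J - \mathrm{id}\|_{L^2(\mu)} \geq \varepsilon \bigr) \leq 2 \exp\!\left( -\frac{J \varepsilon^2}{M^2(1 + c\varepsilon/M)} \right),
\end{equation*}
which combined with the coupling bound gives the desired estimate on $W_2(\mu_B, \mu)$.

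The main technical point to verify carefully is that the hypothesis $\varphi = \mathrm{id}$, understood pointwise, really coincides with $\mathbb{E}[T] = \mathrm{id}$ in the Bochner sense in $L^2(\mu)$; this is where boundedness of $T$ and compactness of $\supp \mu$ are essential, since they legitimize Fubini and guarantee that $T$ defines a strongly measurable $L^2(\mu)$-valued random element. The rest is routine once the reduction to Hilbert-space LLN is in place; everything nontrivial is already encoded in Theorem~\ref{thm_averaging_deformations_in_admissible_case}.
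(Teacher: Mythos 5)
Your proposal is correct and follows essentially the same route as the paper: the explicit formula $\mu_B = \bigl(\tfrac{1}{J}\sum_j T_j\bigr)_\#\mu$ (via Theorem~\ref{thm_averaging_deformations_in_admissible_case} and Proposition~\ref{prop:clememe}), the coupling bound $W_2^2(\mu_B,\mu)\le \|\tfrac{1}{J}\sum_j T_j - \mathrm{id}\|_{L^2(\mu)}^2$ (which is exactly Proposition~\ref{cor:control} specialized to $\nu=\mu$, $T_\nu=\mathrm{id}$), then the strong law of large numbers in the Banach/Hilbert space $L^2(\mu)$ for the centered variables $T_j-\mathrm{id}$, and Yurinskii's Bernstein-type inequality for the quantitative bound. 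Your explicit attention to identifying $\varphi=\mathrm{id}$ with the Bochner expectation is a reasonable addition, but the argument is the same as the paper's.
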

Note that when the warping process is not centered, the problem of estimating the original measure $\mu$ is not identifiable and we can only estimate by the barycenter $\mu_B$ the original measure transported by the mean of the deformation process, namely $\varphi_\# \mu$. \vskip .1in
The proof of this theorem relies on the following proposition.

\begin{prop}\label{cor:control}

Let $(T_i)_{i \in I}$ be an admissible family of deformations on a domain $\Omega \subset \R^n$, 
and let $\mu \in \Pro_2( \Omega)$, absolutely continuous with respect to the $n$-dimensional Lebesgue measure. Let $\mu_j = (T_j)_\# \mu$. Denote by $\mu_B$ the
barycenter with equal weights $1/J$. For every $\nu$ in $\Pro_2(\R^d)$, we have

\begin{equation*}
W_2( \mu_B, \nu) \leq \| \frac{1}{J}\sum_{j = 1}^J T_j - T_\nu \|_{L^2( \mu)}
\end{equation*}

where $T_\nu$ is the Brenier map from $\mu$ to $\nu$.

\end{prop}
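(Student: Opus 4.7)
The plan is to leverage the explicit formula for the barycenter that is available in the admissible setting and then bound the Wasserstein distance via a non-optimal coupling.

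First, I would combine Theorem~\ref{thm_averaging_deformations_in_admissible_case} with Proposition~\ref{prop:clememe} to rewrite the barycenter explicitly as
\begin{equation*}
\mu_B = \bar T_\# \mu, \qquad \bar T := \frac{1}{J} \sum_{j=1}^J T_j.
\end{equation*}
This is the whole point of the preceding development: because the $T_j$ lie in an admissible family, the barycenter is literally the pushforward of $\mu$ by the arithmetic mean of the maps. Once this identity is in hand, everything that follows is transport theory on $\Omega \subset \R^n$.

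Next, I would construct a transport plan between $\mu_B$ and $\nu$ by coupling them through their common source $\mu$. Recall that by definition $T_\nu$ is the Brenier map, so $(T_\nu)_\# \mu = \nu$, while we just showed $\bar T_\# \mu = \mu_B$. Consequently the probability measure
\begin{equation*}
\pi := (\bar T, T_\nu)_\# \mu \in \Pro(\R^d \times \R^d)
\end{equation*}
has first marginal $\mu_B$ and second marginal $\nu$, hence is an admissible (generally suboptimal) coupling.

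Finally, I would invoke the variational definition of $W_2$: since the infimum is taken over couplings,
\begin{equation*}
W_2^2(\mu_B, \nu) \;\leq\; \int_{\R^d \times \R^d} |x - y|^2\, d\pi(x,y) \;=\; \int_\Omega \bigl| \bar T(x) - T_\nu(x) \bigr|^2 d\mu(x) \;=\; \bigl\| \bar T - T_\nu \bigr\|_{L^2(\mu)}^2,
\end{equation*}
and taking square roots yields the announced inequality. There is no genuine obstacle here; the only nontrivial ingredient is the admissibility hypothesis, which is used exclusively to justify the closed-form expression $\mu_B = \bar T_\# \mu$ via the two earlier results. Without admissibility the barycenter would not in general be a pushforward by a convex combination of the $T_j$'s, and this very simple coupling argument would fail.
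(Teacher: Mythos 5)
Your proposal is correct and follows essentially the same route as the paper: use the explicit identity $\mu_B = \bigl(\tfrac{1}{J}\sum_{j=1}^J T_j\bigr)_\# \mu$ coming from Theorem~\ref{thm_averaging_deformations_in_admissible_case} and Proposition~\ref{prop:clememe}, then bound $W_2^2(\mu_B,\nu)$ by the cost of the suboptimal coupling $\pi = (\tfrac{1}{J}\sum_j T_j,\, T_\nu)_\# \mu$. If anything, your version is slightly cleaner in that it only needs the pushforward identity rather than the stronger claim (made in the paper) that $\tfrac{1}{J}\sum_j T_j$ is the Brenier map from $\mu$ to $\mu_B$.
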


\begin{proof}

With the explicit expression of the barycenter, we know that the Brenier map from $\mu$ to $\mu_B$ 
is $1/J \sum_{j = 1}^J T_j$, which implies that

\begin{equation*}
 \pi = ( \frac{1}{J}\sum_{j = 1}^J T_j, T_\nu )_\# \mu
\end{equation*}

is a coupling of $\mu_B$ and $\nu$. Consequently,

\begin{equation*}
 W_2^2( \mu_B, \nu) \leq \int |\frac{1}{J}\sum_{j = 1}^J T_j(x) - T_\nu(x)|^2 \mu(d x).
\end{equation*}

\end{proof}

\section{Statistical Applications} \label{s:appli}
\subsection{Distribution Template estimation from empirical observations}
In many situations, the issue of estimating the mean behaviour of random observations plays a crucial role to analyze the data, in image analysis, kinetics in biology for instance. For this, we propose to use the iterative barycenter of  the empirical distribution as a good estimate of the {\it mean} information conveyed by the data. Moreover, this estimate has the advantage that if the different distribution  are warped from an unknown distribution, the empirical iterative barycenter converges to this pattern when the number of replications grows large.
 %This situation occurs for instance in biology when monitoring the evolution of markers in a cell in order  to analyze the law of their dispersions after a given time.
 \vskip .1in
Assume we observe $j=1,\dots,J$ samples  of   $i=1,\dots,n$ points  $X_{i,j} \in \mathbb{R}^d$ which are i.i.d realizations of measures $\mu_j$.   Hence we observe cloud points or in an equivalent way ${\mu}_{j}^{n}=\frac 1 n \sum_{i=1}^n \delta_{X_{i,j}}$ empirical versions of the measures  $\mu_j$. It is well-known that considering the mean with respect to the number of samples $J$ of all observation points does not provide a good model of the {\it mean} behaviour. Instead  we here consider the iterative barycenter $\mu^J_B=IB(\mu_j,\frac 1 J)$  defined in Definition~\ref{def:IB}.
  The following proposition shows that the  barycenter of the  empirical distributions provides a good estimate for this {\it mean shape}. We point out that this estimator corresponds to the so-called Fr\'echet mean of the empirical measures.
\begin{prop} \label{prop:lafin}
%Let $\gamma_\varepsilon$ denote a $\mathcal{N}(0, \varepsilon {\rm I}_d)$ measure. Set 
%\begin{equation*}
%{\mu_{j}^n} = \widehat{\mu}_j^n * \gamma_{1/n}.
%\end{equation*}
Assume that the observations $X_{.,j} \sim \mu_j$ are warped by a centered admissible deformation process from an unknown template distribution $\mu$ continuous with respect to Lebesgue measure. Set ${\mu^{n,J}_B}  \in \text{Bar}({\mu}_{j}^{n}, \frac{1}{J})$, an empirical mean of the empirical distribution.
As $n \rightarrow + \infty$, we have
\begin{equation*}
 {\mu_B^{n,J}} \longrightarrow \mu_B^J.
\end{equation*}
Moreover, when $n \rightarrow + \infty$ and $J\rightarrow + \infty$, ${\mu_B^{n,J}}$ is a consistent estimate of $\mu$, in the sense that  $$ {\mu_B^{n,J}} \longrightarrow \mu \quad {\rm in} \: W_2 \: \: {\rm distance}.$$
\end{prop}
We point out that   $\mu_B^{n,J}$ exits but is not unique. Actually, to ensure uniqueness, one may consider a regularized version of the empirical measures. For instance let  $\gamma_\varepsilon$ denotes a $\mathcal{N}(0, \varepsilon {\rm I}_d)$ measure. Set 
$\widehat{\mu_{j}^n} = {\mu}_j^n * \gamma_{1/n}.$ In this case
 $\widehat{\mu^{n,J}_B} = \text{Bar}({\mu_{j}^n}, \frac{1}{J})$ is uniquely defined and 
as $n \rightarrow + \infty$, we have
$\widehat{\mu_B^{n,J}} \longrightarrow \mu_B$ in Wasserstein distance.  Note that any other regularization scheme may be used as soon as the corresponding measures converge to the true measures in Wasserstein distance when $n$ goes to infinity.  \vskip .1in
%We point out that we have used here a Gaussian kernel regularization of the empirical measures $\widehat{\mu}_j$. Actually, this regularization is needed in order to obtain the existence of the barycenter of the data. Note that any other regularization scheme may be used as soon as the corresponding measures converge to the true measures in Wasserstein distance when $n$ goes to infinity.\\
%\indent In particular, kernel estimates can be used in this framework. For instance, if there exist for all $j=1,\dots,J$, a density with respect to $\lambda$, the Lebesgue measure on $\mathbb{R}^d$, $f_j$ such that, $\frac{d\mu_j}{d\lambda}=f_j$, one may use a kernel estimation of the density of the data. Let $K$ be a multidimensional kernel in $\mathbb{R}^d$. Let ${f}_{j,n}=\frac{1}{nh^d} \sum_{i=1}^n K_h(.,X_{i,j}) $ be an estimator of the density $f_j$. In this case, set $\mu_{j,n}$ the distribution such that  $\frac{d\mu_{j,n}}{d\lambda}=f_{j,n}$. Let $h=h_n$  goes to zero. In this case, we clearly have $W_2(\mu_{j,n},\mu) \rightarrow 0$ when $n$ goes to infinity. Hence previous Proposition entails the consistency of the iterate barycenter of the $\mu_{j,n}$'s.\vskip .1in
An  important application is given by the issue of ensuring equality between the candidates in an exam with several different referees. This constitutes a natural extension  of the work in~ \cite{ dupuy2011non}  to higher dimensions. \\
\indent Consider an examination with a large number of candidates, such that it is impossible to evaluate the candidates one after another. The students are divided into $J$ groups, and $J$ boards of examiners are charged to grade these  groups: each board grades one group of candidates. The evaluation is performed by assigning $p$ scores.
The $J$ different boards of examiners are supposed to behave the same way, so as to respect the equality among the candidates. Moreover it is assumed that the sampling of the candidates is perfect in the sense that it is done in such a way that each board of examiners evaluates candidates with the same global level. Hence, if all the examiners had the same requirement levels, the distribution of the ranks would be the same for all the boards of examiners. Here, we aim at balancing the effects of the differences between the examiners, gaining equity for the candidates.
The situation can be modeled as follows. For each group $j$ among $ J$ groups of candidates, let ${\bf X}^j= \left\{X^j_i \in \mathbb{R}^p, \: i=1, \dots, n\right\}$ denote the scores of the students within this group. Let $\mu_j$ and $\mu_{j}^{n}$ be respectively  the measure and the empirical measure of the scores in the $j$-th group.\vskip .1in
 We aim at finding the average way of ranking, with respect to the ranks that were given within the $p$ bunches of candidates. For this, assume that there is such an average measure, and that each group-specific measure is warped from this reference measure  by a random process. A good choice is given by the barycenter measure 
In order to obtain a global common ranking for the $N$ candidates, one can now replace the $p$ group-specific rankings by the sole ranking based on barycenter measure. Indeed each measure can be pushed towards the barycenter. As a result, we obtain a new set of scores for the $N$ candidates, which can be interpreted as the scores that would have been obtained, had the candidates been judged by an average board of examiners.
\subsection{Principal Component  Analysis with Wasserstein distance}
   Once we have succeeded in defining a mean of a collection of distributions, 
then the second step consists in understanding the variability of the  the different experiments 
with respect to this {\it average} distribution, which is, in statistics,  the aim of the so-called PCA analysis. In a Euclidean space, a natural way to define principal components is through the minimization of the variance of the residuals.  This concept has been extended to non Euclidean situations  such as manifolds, Kendall's
shape spaces in~\cite{MR2168993}. The principal component directions  are replaced by  principal 
component \emph{curves} from a suitable family of curves, e.g. geodesics. In our framework, we generalize this idea to the Wasserstein distance.\vskip .1in

As previously, let $\mu_1,\dots,\mu_J$ be measures in $ \Pro_2(\R^d)$ and let $\mu_B$ be the mean defined as the Barycenter $\mu_B=Bar(\mu_j,\frac 1 J)$. Let $S_j,\: j=1,\dots,J$ be the transport plan 
between the $\mu_j$'s and $\mu_B$ and write $\mu_j={S_j}_\# \mu_B$. Assume the $(Id,S_j)$ are an admissible family of transformations. Clustering the experiments
 in order to build coherent groups is usually  achieved by comparing a distance between 
these distributions. Here by choosing the Wasserstein distance we get that 
$$ W_2^2(\mu_B,\mu_j)= \int |S_j(x)-x|^2 d\mu_B = \| S_j-{\rm id}\|^2_{L^2(\mu_B)}.$$
Hence statistical analysis of the distributions $\mu_j$'s amounts to clustering their 
Wasserstein square distance $\| S_j-{\rm id}\|^2_{L^2(\mu_B)} \in \mathbb{R}^+$.\vskip .1in

It is known that the Wasserstein metric endows the space of probability measures with a formal
Riemannian structure, in which it is possible to define geodesics, tangent spaces, etc., see \cite{MR2640651}, \cite{ambrosio2004gradient}.
We propose here a method of principal
component analysis using Wasserstein distance based on geodesics of
the intrinsic metric, which follows the ideas developed in \cite{MR2640651}. 
For this,  consider a geodesic segment $\gamma$ at point $\mu$ with direction $T$,  which can be written as 
$$ \forall t \in [0,1],\: \gamma(t)=((1-t) Id+ t T)_\# \mu.$$ 
We extend the definition of  $\gamma$ to every $t \in \R$, with the important provision that
$\gamma$ is in general \emph{not} a geodesic curve for the whole range of $t \in \R$.
We perform PCA with respect to this family of curves which we somewhat abusively refer to as ``geodesic curves``
on their extended range.
We will come back to this discussion at the end of our analysis.

For every $\mu_j$, the natural
 distance to the geodesic curve $\gamma$ is given by
$$d^2(\mu_j,\gamma)=\inf_{t \in \R }  W_2^2(\mu_j,\gamma(t)).$$
%Hence for any $\mu_j,\: j=1,\dots,J$ let $F_B$ and respectively $F_j$ be the distribution functions of the barycenter measure $\mu_B$ and  respectively the measures $\mu_J$, then define 
%$$d^2(\mu_j,\gamma)= \inf_{t \in \R }  \int \left[((1-t) Id+ t T) \circ F_B^{-1}- F_j^{-1}  \right]^2 dt. $$
\begin{definition}
A geodesic $\gamma_1$ is called a {\it first generalized principal component geodesic (GPCG)} to the $\mu_j$'s if it minimizes the following quantity
\begin{equation} \label{geoPCA}
\gamma \mapsto  \frac{1}{J} \sum_{j=1}^J d^2(\mu_j,\gamma)
\end{equation}
Then we define the {\it second GPCG}, a geodesic $\gamma_2$  which minimizes \eqref{geoPCA} over all geodesics that have at least 
one point in common with $\gamma_1$ and that are orthogonal to $\gamma_1$ at all points in common. \\
Every point $\mu^\star$ that minimizes $\mu \mapsto \frac{1}{J} \sum_{j=1}^J W_2^2(\mu_j,\mu)$ over 
all common points  of $\gamma_1$ and $\gamma_2$ will be called a {\it principal component geodesic mean}. 
Given the first and the second principal component geodesics $\gamma_1$ and $\gamma_2$  with principal 
component geodesic mean $\mu^\star$ we say that a geodesic $\gamma_3$ is a {\it third principal component 
geodesic} if it minimizes \eqref{geoPCA} over all geodesics that meet previous principal components 
orthogonally at $\mu^\star$. Analogously, principal component geodesics of higher order
are defined. \end{definition}
%\indent Given the principal component geodesics $\gamma_k,\ : k\geq 1$ to  $\mu=(\mu_j,\: j=1,\dots J)$, 
%we denote by $\mu_j^{(k)}$ the orthogonal projection of $\mu_j$ onto $\gamma_k$. Hence, we can finally define 
%the geodesic variance explained by the $k$-th principal component geodesic  as obtained by projection
%$$V^{(k)}(\mu)=\frac{1}{J}\sum_{j=1}^J W_2^2(\mu_j^{(k)}, \mu^\star),$$ 
%while
%$$V(\mu)=\sum_k V^{(k)}(\mu).$$
Here we will focus on the computation of the first geodesic component $\gamma_1$.  We will only consider geodesic curves from $\mu_B$, in that case note that $\mu_B=\mu^\star$. Hence we root our analysis at the central point given by $\mu_B$ which plays the role of the mean of the sample of distributions. In this setting,
we first define a geodesic starting at a measure  $\mu_B$  directed by a map $T$ as 
$\gamma(t)=((1-t)Id+t T)_\# \mu_B$. We consider a family  of maps $T$ such that $(Id,S_j,T)$ is an admissible family of deformations. Hence, in that case, the distance of any measure $\mu_j$ 
with respect to such a geodesic can be written as
\begin{align*}
d^2(\mu_j,\gamma) & = \inf_{t \in \R }  \int \left[((1-t) Id+ t T) \circ S_j^{-1}-Id   \right]^2 d\mu_j(x) \\
& = \inf_{t \in \R }  \int \left[((1-t) Id+ t T) -S_j   \right]^2 d\mu_B(x) \\
& =  - \frac{<S_j  -Id ,T-Id>_{L^2(\mu_B)}^2}{\|T-Id \|^2_{L^2(\mu_B)}} 
\end{align*}
where $\|.\|_{L^2(\mu_B)}$ denotes the quadratic norm with respect to the measure $\mu_B$ with corresponding scalar product $<.,.>_{L^2(\mu_B)}$. 
Finally PCA with respect to Wasserstein distance amounts to minimizing with respect to $T$ 
 the  quantity $\sum_{j=1}^J d^2(\mu_j,\gamma)$, which can be written as 
%\begin{align*}
%T\mapsto & \sum_{j=1}^J d^2(\mu_j,\gamma) \\
%=& \sum_{j=1}^J  \| S_j  \circ F_B^{-1}-F^{-1} \|^2 -\sum_{j=1}^J |<S_j  \circ F_B^{-1}-F^{-1},\frac{ (T-Id)\circ F^{-1} }{ \| (T-Id)\circ F^{-1} \|}>|^2.
%\end{align*}
$$ T \mapsto  - \sum_{j=1}^J |<S_j  -Id,\frac{ (T-Id) }{ \| T-Id \|_{L^2(\mu_B)}}>|^2_{L^2(\mu_B)} .$$ 
If we set $v= T-Id$, this maximization can be written as finding the solution to 
$$ {\rm arg}\max_{v,\: \| v\|_{L^2(\mu_B)}=1} \sum_{j=1}^J | < S_j -  Id,v>|^2_{L^2(\mu_B)},$$ 
which corresponds 
 to the functional principal component analysis of the maps $S_j ,\: j=1,\dots,J$
  in the space $L^2(\mu_B)$.  This analysis can be achieved using tools defined for instance 
in \cite{MR2168993}. Finally, if we get $T^{(1)}$ the map  corresponding to the first functional principal component, the corresponding principal geodesic if obtained by setting $\gamma^{(1)}(t)=((1-t) Id+ t T^{(1)})_\# \mu_B$. The other principal components can be computed using the same procedure.\\
In the one dimensional case, the situation is  simpler since,  the distance between $\mu_j$ with distribution function $F_j$ and a geodesic $\gamma$ from $\mu_B$ with distribution function $F_B$ to $T_\# \mu_B$ is given by
$$d^2(\mu_j,\gamma)= \inf_{t \in \R }  \int \left[((1-t) Id+ t T) \circ F_B^{-1}- F_j^{-1}  \right]^2 dt. $$
 Hence PCA analysis amounts to maximizing for all functions $T$
\[
T\mapsto  \sum_{j=1}^J |<S_j  \circ F_B^{-1}-F_B^{-1},\frac{ (T-Id)\circ F_B^{-1} }{ \| (T-Id)\circ F_B^{-1} \|}>|^2,\]
which corresponds to the functional PCA  of the maps $S_j ,\: j=1,\dots,J$
  in the space $L^2(\mu_B)$ without any restriction. \vskip .1in

%   Hence,  in this framework, choosing $F$  equal to
%$F_B$ is a natural approximation of the solution of the initial optimization program. Then the final PCA is conducted by considering geodesics starting at $\mu_B$ with direction found by maximizing 
% \begin{align*} T\mapsto & \sum_{j=1}^J  |<(S_j-Id)  \circ F_B^{-1},\frac{ (T-Id)\circ F_B^{-1} }{ \| (T-Id)\circ F_B^{-1} \|}>|^2 \\
% & = \sum_{j=1}^J  |<S_j-Id,T-Id>_{L^2_{(\mu_B)}}|^2\end{align*}  for $T$ such that $\|T-Id\|_{L^2_{(\mu_B)}}=1$, which corresponds to a functional 
%PCA in $L^2_{(\mu_B)}$.
% This analysis can be achieved using tools defined for instance 
%in \cite{MR2168993}. Finally, if we get $T^{(1)}$ the map  corresponding to the first functional principal component, the corresponding principal geodesic if obtained by setting $\gamma^{(1)}(t)=((1-t) Id+ t T^{(1)})_\# \mu_B$. The other principal components can be computed using the same procedure. \vskip .1in
Let us come back to the caveat that the curves chosen are not Wasserstein geodesics on the entire parameter range.
It is easy to check in the one dimensional case (see \cite{ambrosio2004gradient}) that a curve $\gamma(t) = ((1-t) Id+ t T)_\# \mu$ is a geodesic curve
for all $t \in \R$ such that $(1-t) \text{Id} + t T$ is an increasing function. Assuming $T'$ takes values in the interval $[a, b]$,
$0 < a < 1 < b$, this means that $\gamma$ is a geodesic curve for all $t \in [1/(a-1), 1/(b-1)]$.
Once the analysis above yields the expression of $T^{(1)}$ and the $t^*_j$ minimizing $d^2(\mu_j, \gamma)$, it is possible to check whether they
fall in this range. Actually, 
$$t^*_j = \frac{<S_j-Id,T^{(1)}-Id>_{L^2(\mu_B)}}{\|T^{(1)}-Id \|_{L^2(\mu_B)}^2} .$$ Hence, when the measures $\mu_j$ are not too far from their barycenter
(i.e. when the $\|S_j - \text{Id} \|_\infty$ are small) these conditions are met.\vskip .1in
Within this framework, we can analyze the toy example of translation effect, studied in \cite{ Gamboa-Loubes-Maza-07} or in \cite{ GALLON:2011:HAL-00593476:1}. Here consider i.i.d random variables $X_i \in \mathbb{R}^p,\: i=1,\dots,n$ who are translated by parameters $\theta_j=(\theta_j^1,\dots,\theta_j^p),\: j=1,\dots,J$. Hence the observation model is $X_{ij}=X_i+\theta_j$. Let $\mu$ be the distribution of the $X_i$'s and assume that this distribution admits a density $f$ with respect to the Lebesgue measure. Hence $\mu_j$'s, the distributions of the $X_{ij}$ are given by $$ \mu_j={T_j}_\#\mu $$ with $T_j(x)=x+\theta_j$. They admit densities with respect to Lebesgue measure, $f_j$'s which are such that $\forall x \in \mathbb{R}^p,\: f_j(x)=f(x-\theta_j)$.  In this case, $\mu_B$ the Barycenter of the $\mu_j$'s exists and is characterized by its density $f_B(x)=f(x-\bar{\theta})$, with $\bar
{\theta}=\frac{1}{J} \sum_{j=1}^J \theta_j$. 
%Note that it corresponds in this case to the iterated Barycenter.
 \vskip .1in
In this context, each distribution $\mu_j$ can be expressed as $$ \mu_j={S_j}\# \mu_B, \quad S_j(x)=\frac{1}{J}\sum_{k=1}^J T_k^{-1}\circ T_j(x)=x+\bar{\theta}-\theta_j. $$
Now finding the first geodesic component rooted in $\mu_B$ amounts to maximize with respect to $T$ the quantity
 \begin{align*} T \mapsto & \sum_{j=1}^J  |<S_j-Id ,\frac{ (T-Id) }{ \| T-Id \|_{L^2(\mu_B)}}>_{L^2(\mu_B)}|^2 \\
 & = \sum_{j=1}^J \|\theta_j-\bar{\theta}\|^2  \frac{ (\int ( T-Id ) d\mu_B)^2 }{ \| T-Id \|^2_{L^2_{(\mu_B)}}}
  \end{align*} which is achieved by choosing $T=Id + c $ for all constant $c$, where $\|.\|$ denotes the norm in $\mathbb{R}^p$. Hence the first principal geodesic component is given by $\mu^1_t$ with density $f(x-\bar{\theta}-t)$ while the variance explained is given by $\sum_{j=1}^J \|\theta_j - \bar{\theta}\|^2$. This corresponds actually to the variance of the deformations.

\section{Appendix} \label{s:append}
Proof of Lemma~\ref{lem:iff}
\begin{proof}

The existence of a solution of the multimarginal problem (\ref{eq:a_minimiser}) follows from a classical compactness argument.

Let $\gamma \in \Gamma(\mu_1,...,\mu_J)$ and set $\nu=T_\# \gamma$. For all $1 \leq j \leq J$,
\[
W_2^2(\nu,\mu_j) \leq \int \|T(x_1,...,x_J) - x_j\|^2 d\gamma(x_1,...,x_J),
\]
and thus,
\begin{equation}\label{eq:premier_sens}
\sum_{1 \leq j \leq J} \lambda_j W_2^2(\nu,\mu_j) \leq \int \sum_{1 \leq j \leq J} \lambda_j  \|T(x_1,...,x_J) - x_j\|^2 d\gamma(x_1,...,x_J).
\end{equation}

For $1 \leq j \leq J$ and a probability measure $\hat{\nu}$, denote $\hat{\pi_j}$ a minimiser of
\[
\int \|x-x_j\|^2d\pi(x,x_j)
\]
over all $\pi \in \Gamma(\nu,\mu_j)$ and define $\Pi$ by
\begin{equation}\label{eq:def_Pi}
\Pi(A\times B_1 \times ... \times B_J) = \hat{\nu}(A) \frac{\pi_1(A\times B_1)}{\hat{\nu}(A)}... \frac{\pi_J(A\times B_J)}{\hat{\nu}(A)}
\end{equation}

Suppose now that $\gamma$ is moreover a minimizer of (\ref{eq:a_minimiser}), we want to show that $\nu=T_\# \gamma$ is a barycenter. Indeed,
\begin{align}
\sum_{1 \leq j \leq J} \lambda_j W_2^2(\hat{\nu},\mu_j) = & \sum_{1 \leq j \leq J} \lambda_j \int \|x - x_j\|^2 d\Pi(x,x_1,...,x_J) \label{eq:par_def}\\
= & \int \sum_{1 \leq j \leq J} \lambda_j \|x - x_j\|^2 d\Pi(x,x_1,...,x_J) \nonumber\\
\geq & \int \inf_{z \in E} \sum_{1 \leq j \leq J} \lambda_j \|z - x_j\|^2 d\Pi(x,x_1,...,x_J) \label{eq:inegalite}\\
= & \int \sum_{1 \leq j \leq J} \lambda_j \|T(x_1,...,x_J) - x_j\|^2 d\Pi(x,x_1,...,x_J) \label{eq:T_minimal}\\
\geq &\int \sum_{1 \leq j \leq J} \lambda_j \|T(x_1,...,x_J) - x_j\|^2 d\gamma(x_1,...,x_J) \label{eq:gamma_minimal}\\
\geq & \sum_{1 \leq j \leq J} \lambda_j W_2^2(\nu,\mu_j)\label{eq:par_ps},
%&\leq \sum_{1 \leq j \leq J} \min_{\pi_j \in \Gamma(\nu,\mu_j)} \lambda_j \int \|x - x_j\|^2 d\pi_j(x,x_j)\\
\end{align}
where (\ref{eq:par_def}) holds by definition (\ref{eq:def_Pi}) and (\ref{eq:T_minimal}) holds since for fixed $x_1,...,x_J$, the sum $\sum_{1 \leq j\leq J} \lambda_j \|x-x_j\|^2$ attains its minimum at $x=T(x_1,...,x_J)=\sum_{1 \leq j\leq J} \lambda_j x_j$. The inequality (\ref{eq:gamma_minimal}) holds since $\gamma$ is optimal and (\ref{eq:par_ps}) holds by (\ref{eq:premier_sens}).

Since $\hat{\nu}$ was arbitrary, this shows that $\nu$ is a barycenter.

On the other hand, taking $\hat{\nu}$ a barycenter, inequality (\ref{eq:inegalite}) becomes an equality, so that, for $\Pi$-almost all $(x,x_1,...,x_J)\in \R^{d\times (J+1)}$,
\[
\sum_{1 \leq j \leq J} \lambda_j \|x - x_j\|^2 = \inf_{z \in E} \sum_{1 \leq j \leq J} \lambda_j \|z - x_j\|^2
\]
which shows that $x=T(x_1,...,x_J)$ $\Pi$-almost surely, and thus that $\hat{\nu}=T_\#\Pi_p$, where $\Pi_p$ is the projection of $\Pi$ over the last $J$ marginals. The fact that $\Pi_p$ is a solution of (\ref{eq:a_minimiser}) is a consequence of (\ref{eq:premier_sens}) and equality (\ref{eq:gamma_minimal}).

\end{proof}

Proof of Theorem~\ref{th:EmpiricVersion}
We know by Lemma~\ref{lem:iff} that for all $n\geq 1$, there exists $\gamma^n\in \Gamma(\mu_1,...,\mu_J)$ such that $\mu^n=T_\#\gamma^n$.
We first show that the sequence $(\gamma^n)_{n\geq 1}$ is tight. Let $B_1, \ldots, B_J$ be large balls in $\R^d$, 
we have

\begin{align*}
\gamma^n((B_1 \times \ldots B_J)^c) & = \gamma^n( \cup_{j = 1}^J E \times \ldots \times E \times B_j^c \times E \ldots \times E) \\
{} & \leq \sum_{j = 1}^J \gamma_n (E \times \ldots \times E \times B_j^c \times E \ldots \times E) \\
{} & = \sum_{j = 1}^J \mu_j^n(B_j^c).
\end{align*}

Thus, tightness of the sequences $(\mu_j^n)_{n\geq 1}$ guarantees tightness of $(\gamma^n)_{n \geq 1}$. Note that the under the assumption of the convergence of $\mu_j^n$, $n \geq 1$ in
Wasserstein distance, we  recover the compactness of $(\gamma^n)_{n \geq 1}$ in Wasserstein topology. Indeed, denote $\gamma$ any weak limit of the tight sequence $(\gamma^n)_{n \geq 1}$, the second moments are converging:
\begin{align*}
 \int |x|^2 d \gamma^n & = \sum_{j = 1}^J \int |x_j|^2 d \mu_j^n \\
{} & \rightarrow \int |x_j|^2 d \mu_j= \int |x|^2 d \gamma.
\end{align*}
Here we used the fact that Wasserstein's convergence coincides with weak convergence together with the convergence of second order moments. 
The above implies tightness of the sequence of barycenters ${\mu}^n_B, n\geq 1$ : indeed, it is the push-forward of the tight sequence 
$(\gamma^n)_{n \geq 1}$ by the application $T : \R^{d \times J} \rightarrow \R^d$, which is 
 Lipschitz continuous (with Lipschitz constant bounded by $1$).
It is readily checked that this operation preserves tightness, as it preserves convergence 
(in weak and Wasserstein topologies).

We conclude by showing that any limiting point ${\mu}^\infty$ is a minimizer for the barycenter problem
associated with $\mu_1, \ldots, \mu_J$. Denote by ${\mu}_B$ a barycenter of $\mu_1, \ldots, \mu_J$.
Since ${\mu}^n_B$ is a barycenter for $\mu_1^n, \ldots, \mu_J^n$, we have

\begin{equation*}
 \sum_{j = 1}^J \lambda_j W_2^2({\mu}^n_B, \mu_j^n) \leq \sum_{j = 1}^J \lambda_j W_2^2( \mu_B, \mu_j^n).
\end{equation*}

Since, up to a subsequence, ${\mu}^n_B \rightarrow {\mu}^\infty$ in Wasserstein distance, letting $n \rightarrow + \infty$ shows
\begin{equation} \label{titi}
  \sum_{j = 1}^J \lambda_j W_2^2({\mu}^\infty, \mu_j) \leq \lim \sum_{j = 1}^J \lambda_j W_2^2( \mu_B, \mu_j^n).
\end{equation}

%\begin{proof}
%
%Fix first $n\geq 1$. Let $\mu^{n,k}_j= \mu^n_j * \mathcal{N}(0, {\rm I}_d/k)$ for $k \geq 1$.
%Obviously, $\mu^{n,k}_j \stackrel{W}{\rightarrow} \mu^n_j$ when $k \rightarrow \infty$. Moreover, proposition 4.2 in \cite{agueh2010barycenters} states thus that there exists $\gamma^{k,n} \in \mathcal{P}_2(\mathbb{R}^{d \times J})$ such that ${\mu}_B^{n,k}=\text{Bar}( (\mu^{n,k}_j, \lambda_j)_{1 \leq j \leq J})$ is such that ${\mu}_B^{n,k}=T \# \gamma^{k,n}$. 
%So, we can apply lemma \ref{ptitlemme} to $(\mu^{n,k}_j)_{k\geq1}, 1 \leq j \leq J$ to get the existence of $\gamma^n \in \mathcal{P}_2(\mathbb{R}^{d \times J})$ such that ${\mu}_B^n=T \# \gamma^n$. 
%
%Then, apply again lemma \ref{ptitlemme} to all subsequences of $(\mu^n_j)_{n \geq 1}, 1 \leq j \leq J$ to get the result.

%\end{proof}

Proof of Theorem~\ref{thm_averaging_deformations_in_admissible_case}
\begin{proof}
For the first part, \eqref{firstpart}, we use induction on $J$. For $J = 1$, the result is obvious. Suppose then that it is established for $J \geq 1$.
Choose $T_1, \ldots, T_{J + 1}$ from a family of admissible deformations, and fix $\lambda_1, \ldots, \lambda_{J+1}$ with
$\sum_{j = 1}^{J + 1} \lambda_j = 1$. 
Using the definition of the iterated barycenter, we have

\begin{align*}
  IB( ( \mu_j , \lambda_j )_{1 \leq j \leq J + 1} ) & = \text{Bar} \left(  IB \left( ( \mu_j , \lambda_j )_{1 \leq j \leq J} ),  \sum_{j = 1}^{J} \lambda,_j \right), (\mu_{J + 1}, \lambda_{J+1}) \right) \\
{} & = \text{Bar} \left( \left( (\frac{1}{ \Lambda_j} \sum_{j = 1}^J \lambda_j T_j )_\# \mu, \Lambda_J \right), (\mu_{J+1}, \lambda_{J+1})  \right)
\end{align*}

where we set $\Lambda_J = \sum_{j = 1}^J \lambda_j$.

Set $\nu = (\frac{1}{ \Lambda_j} \sum_{j = 1}^J \lambda_j T_j )_\# \mu$. As $\mu_{J+1} = {T_{J+1}}_\# \mu$, we have also
$\mu = (T_{J+1})^{-1}_\# \mu_{J+1}$, and

\begin{align*}
 \nu &  = (\frac{1}{ \Lambda_j} \sum_{j = 1}^J \lambda_j T_j ) \circ  (T_{J+1})^{-1} _\# \mu \\
{} & = (\frac{1}{ \Lambda_j} \sum_{j = 1}^J \lambda_j T_j \circ  (T_{J+1})^{-1} ) _\# \mu_{J+1}.
\end{align*}

Now, observe that by assumption all the maps $ T_j \circ  (T_{J+1})^{-1}$ are gradients of convex functions, so that their convex combination
also is. By Brenier's theorem, the map

\begin{equation*}
 \mathcal{T} = \frac{1}{ \Lambda_j} \sum_{j = 1}^J \lambda_j T_j \circ  (T_{J+1})^{-1}
\end{equation*}

is the Brenier map from $\mu_{J +1}$ to $\nu$. We deduce that the barycenter of $\nu$ and $\mu_{J+1}$ is 

\begin{align*}
 {} & \left( \lambda_{J+1} \text{Id} + \Lambda_J \mathcal{T} \right)_\# \mu_{J+1} \\
{} & = \left( \lambda_{J+1} T_{J+1} + \Lambda_J \mathcal{T} \circ T_{J+1}  \right)_\# \mu \\
{} & = ( \sum_{j = 1}^{J + 1} \lambda_j T_j )_\# \mu.
\end{align*}

This finishes the first part of the proof.\vskip .1in

For the identification of the barycenter and the iterative barycenter  given in~\eqref{prop:clememe}, we proceed as follows.
Set $T(x_1, \ldots, x_J) = \sum_{j = 1}^J \lambda_j x_j$ for $x_1, \ldots, x_J \in \R^d$.
Proposition 4.2 of \cite{agueh2010barycenters} claims that the barycenter of
$( \mu_j , \lambda_j )_{1 \leq j \leq J}$, denoted by $\mu_B$, satisfies $\mu_B = T_\# \gamma$
where $\gamma \in \Pro((R^d)^J)$ is the unique solution of the optimization problem

\begin{equation*}
\inf \left\{ \int \sum_{j = 1}^J \lambda_j |T(x) - x_j|^2 d \gamma(x_1, \ldots, x_J), \quad \gamma \in \Pi(\mu_1, \ldots, \mu_J)  \right\}
\end{equation*}

where $\Pi( \mu_1, \ldots, \mu_J)$ is the set of probability measures on $\R^{dJ}$ with $j$-th marginal $\mu_j$, $1 \leq j \leq J$.
This can be rewritten as

\begin{equation*}
\frac{1}{2} \inf \left\{ \int \sum_{i, j = 1}^J \lambda_i \lambda_j |x_i - x_j|^2 d \gamma(x_1, \ldots, x_J), \quad \gamma \in \Pi(\mu_1, \ldots, \mu_J)  \right\}.
\end{equation*}

The integral is bounded below by $\sum_{i, j = 1}^J \lambda_i \lambda_j W_2^2(\mu_i, \mu_j)$ (because each term of the sum is bounded by
$W_2^2(\mu_i, \mu_j)$). On the other hand, choosing

\begin{equation*}
 \gamma = (T_1, \ldots, T_j)_\# \mu,
\end{equation*}

we see that $\gamma \in \Pi(\mu_1, \ldots, \mu_J)$, and that

\begin{equation*}
 \int |x_j - x_i|^2 d \gamma = \int |T_j(x) - T_i(x)|^2 d \mu(x) = \int [T_j \circ{T_i}^{-1}(x) - x|^2 \mu_i(d x) = W_2^2( \mu_i, \mu_j).
\end{equation*}

Thus $\gamma$ is optimal, and we have 

\begin{equation*}
 \mu_B = T_\# \gamma = ( \sum_{j = 1}^J \lambda_j T_j )_\# \mu.
\end{equation*}

\end{proof}

Proof of Theorem~\ref{th:main}
\begin{proof}
Using the results of Proposition~\ref{cor:control}, we get that
$$
 W_2^2( \mu_B, \mu) \leq \int |\frac{1}{J}\sum_{j = 1}^J T_j(x) - x|^2 \mu(d x).$$
Almost sure convergence towards $0$ of  $\frac{1}{J}\sum_{j = 1}^J (T_j-{\rm id})$ is 
directly deduced from Corollary 7.10 (p. 189) in \cite{Ledoux91}, which is an extension
 of the Strong Law of Large Numbers to Banach spaces. Then the result follows from dominated convergence.

Likewise, obtaining error bounds is straightforward.
Assuming that $\|T - id \|_{L^2} \leq M$ a.s., we can use Yurinskii's version of Bernstein's inequality 
in Hilbert spaces 
(\cite{yurinski1976exponential}, p. 491) to get the result announced.
\end{proof}
{\bf Acknowledgements: } We thank an anonymous referee for his/her comments and suggestions which contribute to numerous improvements in the paper.
\bibliography{WarpWasser2}{}
\bibliographystyle{plain}

\end{document}